\documentclass[a4paper,12pt]{article}

\usepackage[table,dvipsnames,svgnames,x11names,hyperref]{xcolor}
\usepackage[utf8]{inputenc}

\usepackage{lmodern}
\usepackage[T1]{fontenc}

\usepackage[top=1in, bottom=1.25in, left=1.25in, right=1.25in]{geometry}
\usepackage{amsmath,amssymb,amsthm}

\usepackage{tikz}
\usepackage{array}
\usepackage{enumerate}
\usepackage{graphicx}
\usepackage{float}
\usepackage{caption}
\usepackage{subcaption}
\usepackage[colorlinks=true, allcolors=MidnightBlue]{hyperref}

\newtheorem{thm}{Theorem}

\newtheorem{lem}[thm]{Lemma}
\newtheorem{prop}[thm]{Proposition}
\newtheorem{conj}[thm]{Conjecture}

\theoremstyle{definition}

\theoremstyle{remark}

\newtheorem*{rem}{Remark}

\author{Sel\c{c}uk Kayacan
  \thanks{I would like to thank to Volkmar Welker for suggesting me to work on Hayashi's Conjecture. I would also like to thank to David Stanovský for an improvement of the statement and proof of Theorem~\ref{thm:prim}.}
}
\title{On a conjecture about profiles of\\ finite connected racks}
\date{}

\begin{document}

\maketitle

\small

\begin{center}
  Bahçeşehir University, Faculty of Engineering\\ and Natural Sciences,
  Istanbul, Turkey\\
  {\it e-mail:} \href{mailto:selcuk.kayacan@eng.bau.edu.tr}{selcuk.kayacan@eng.bau.edu.tr}
\end{center}

\begin{abstract}
  A rack is a set with a binary operation such that left multiplications are automorphisms of the set and a quandle is a rack satisfying a certain condition. For a finite connected rack the cycle type of the permutation defined by left multiplication by an element is independent from the chosen element. This cycle type is called the profile of the rack. Hayashi conjectured, in the profile of a finite connected quandle, the length of a cycle must divide the length of the largest cycle. In this paper, we prove Hayashi's Conjecture in some particular cases.
  
  \smallskip
  \noindent 2010 {\it Mathematics Subject Classification.} Primary: 20N99;\\ Secondary: 08A99

  \smallskip
  \noindent Keywords: Finite connected rack; profile of a rack

\end{abstract}

\section{Introduction}

Racks and quandles are algebraic objects that are mostly studied in the context of knot theory. The defining properties of those objects are in a sense compatible with the Reidemeister moves which makes them useful to define knot invariants. For example, in \cite{Joy82} Joyce introduced \emph{knot quandle} and showed that it is a complete invariant for knots and in \cite{FR92} Fenn and Rourke proved that the \emph{fundamental rack} is a complete invariant for irreducible framed links in a $3$-manifold. There are various notational conventions used to define racks and quandles in literature. In this paper we define racks and quandles in the following way. 

A \emph{rack} $X$ is a set together with a binary operation $\triangleright\colon X\times X\to X$ satisfying the following two axioms:
\begin{itemize}
\item[\textbf{(A1)}] for all $x,y,z\in X$ we have $x \triangleright (y \triangleright z) = (x \triangleright y) \triangleright (x \triangleright z)$ 
\item[\textbf{(A2)}] for all $x,z\in X$ there is a unique $y\in X$ such that $x \triangleright y = z$
\end{itemize}
A rack $X$ is a \emph{quandle} if, additionally, it satisfies the following axiom:
\begin{itemize}
\item[\textbf{(A3)}] for all $x\in X$ we have $x\triangleright x = x$
\end{itemize}

Let $G$ be a group and for any two elements $a,b\in G$ let $a\triangleright b := aba^{-1}$. Then, the group $G$ as well as any conjugacy class of $G$ satisfies those three axioms; hence equipped with the conjugation operation $\triangleright$ each one of them is an example of a quandle. More generally, we call a subset $X$ of $G$ \emph{conjugation rack} if it is a rack with the conjugation operation inherited by the group.

In this paper we only consider finite racks, although some of the statements are also valid for infinite racks. It is customary to identify elements of a finite rack $X$ with integers $1,2,\dots,n$ if $X$ has $n$ elements. By axiom (A2), the map $\phi_x\colon y\to x\triangleright y$ is a permutation hence is an element of the symmetric group $S_{n}$ of degree $n$. A permutation $\sigma\in S_{n}$ is an automorphism of $X$ if and only if $\sigma(x\triangleright y) = \sigma(x)\triangleright \sigma(y)$ for every $x,y\in X$. By axiom (A1), the map $\phi_x$ is an automorphism of $X$.

The set of all automorphisms of $X$, denoted $\mathsf{Aut}(X)$, form a subgroup of $S_{n}$ which is called the \emph{automorphism group} of $X$. The map $\Phi\colon x\mapsto \phi_x$ defines a rack morphism from $X$ to $S_{n}$, where $S_{n}$ is considered as a quandle with conjugation operation (see Proposition~\ref{prop:act}). The \emph{inner automorphism group} of $X$, denoted $\mathsf{Inn}(X)$, is the subgroup of $\mathsf{Aut}(X)$ generated by the elements of $\Phi(X)$. Let $\sigma$ be an automorphism of $X$. Since $\sigma\phi_x\sigma^{-1} = \phi_{\sigma(x)}$ for every $x\in X$, we see that $\mathsf{Inn}(X)$ is a normal subgroup of $\mathsf{Aut}(X)$. In general $\mathsf{Inn}(X)$ and $\mathsf{Aut}(X)$ may be different groups.

A rack $X$ is \emph{faithful} if the map $\Phi\colon X\to \mathsf{Inn}(X)$ is injective. Suppose $X$ is faithful. Obviously, the map $\Phi$ is an isomorphism between $X$ and $\Phi(X)$, hence $X$ is a quandle. Moreover, the center of the inner automorphism group is trivial and the action of $\mathsf{Inn}(X)$ on $\Phi(X)$ by conjugation is faithful (see Proposition~\ref{prop:act}).

A rack $X$ is \emph{connected} if the action of its inner automorphism group on $X$ itself is transitive. Suppose $X$ is a connected rack. In that case, any two elements of $\Phi(X)$ can be conjugated by an element of $\mathsf{Inn}(X)$. Since in symmetric group conjugate elements have the same cycle type, connectedness of $X$ implies each element of $\Phi(X)$ has the same cycle type. This cycle type $\lambda = (\lambda_0^{a_0},\lambda_1^{a_1},\dots,\lambda_t^{a_t})$ is called the \emph{profile} of $X$. Here $\lambda_s$ $(0\leq s\leq t)$ are ordered increasingly. Notice that $x\triangleright x = x$ for every $x\in X$ if $X$ is a quandle which means in the profile $\lambda$ of a quandle $X$ the value of the least element $\lambda_0$ must always be $1$. The purpose of this paper is to prove some special cases of the following conjecture which was stated by Hayashi originally for quandles (see \cite[Conjecture~1.1]{Hay13}). 

\begin{conj}\label{conj:hay}
  For a finite connected rack $X$ with profile $\lambda = (\lambda_0^{a_0},\lambda_1^{a_1},\dots,\lambda_t^{a_t})$, each $\lambda_s$\, $(0\leq s \leq t)$ divides $\lambda_t$.
\end{conj}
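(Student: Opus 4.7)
My starting point is the orbit–stabiliser translation afforded by connectedness. Fix $x\in X$, put $H=\mathsf{Inn}(X)$ and $K=\{\sigma\in H:\sigma(x)=x\}$. The map $\sigma K\mapsto \sigma(x)$ is an $H$-equivariant bijection $H/K\to X$. In the quandle case $\phi_x\in K$, so $\phi_x$ acts on $H/K$ by left translation, the orbit of $hK$ under $C:=\langle\phi_x\rangle$ has size $[C:C\cap hKh^{-1}]$, and writing $L_y:=C\cap G_y$ for $y\in X$ the profile of $X$ records exactly the multiset of indices $\{[C:L_y]:y\in X\}$; a small twist handles the proper rack case.

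\textbf{Reformulation.} Because $C$ is cyclic, a subgroup $L\le C$ is determined by its order, and for $L,L'\le C$ one has $L\subseteq L'$ iff $|L|\mid |L'|$. Since the $\lambda_s$-cycles correspond to the points $y$ with $|L_y|=|C|/\lambda_s$, the divisibility $\lambda_s\mid \lambda_t$ is equivalent to $|L_{y_t}|\mid|L_{y_s}|$, and hence to $L_{y_t}\subseteq L_{y_s}$. Conjecture~\ref{conj:hay} thus becomes the statement that the subgroup $L_{y_t}$ attached to a longest cycle is contained in every $L_y$, equivalently that $L_{y_t}$ coincides with the kernel $\bigcap_{y\in X}L_y$ of the $C$-action on $X$.

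\textbf{Strategy and obstacle.} Two structural features are available to exploit. Every $\sigma\in K$ satisfies $\sigma\phi_x\sigma^{-1}=\phi_{\sigma(x)}=\phi_x$, so $K$ centralises $\phi_x$, normalises $C$, and permutes the cycles of $\phi_x$; the set $Y_s$ of points on $\lambda_s$-cycles is therefore $K$-stable for every $s$. Moreover, cyclicity of $C$ forces the $L_y$ to arrange themselves in the divisor lattice of $|C|$. When $\phi_x$ has prime-power order this lattice is a chain, so divisibility of indices is automatic; when $H$ acts primitively on $X$, maximality of $K$ bounds the $K$-orbits on $X\setminus\{x\}$ and hence the distinct $L_y$, and this is where I expect Theorem~\ref{thm:prim} to be proved. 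The main obstacle to the full conjecture is genuine: when $|C|$ has several distinct prime divisors, pairwise incomparable cyclic subgroups abound, and no purely permutation-theoretic input forces $L_{y_t}$ to be minimum among the $L_y$. Extracting that conclusion seems to require turning self-distributivity (A1) into a nontrivial constraint relating stabilisers at different points, and this is where the real difficulty lies.
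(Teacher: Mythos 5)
The statement you were asked to prove is Conjecture~\ref{conj:hay}, which is an \emph{open conjecture}: the paper does not prove it, and only establishes special cases (Theorem~\ref{thm:prim} when $\mathsf{Inn}(X)$ acts primitively, Theorem~\ref{thm:sym} for conjugacy classes of symmetric groups, and, citing \cite{KN16}, connected affine quandles). So the fact that your proposal stops short of a proof is the expected outcome, and you are right to flag it honestly rather than paper over it. What can be assessed is the reformulation, and it is sound: fixing $x$, setting $C=\langle\phi_x\rangle$ and $L_y=C\cap G_y$, the cycle lengths of $\phi_x$ are the indices $[C:L_y]$, and since $C$ is cyclic the divisibility $\lambda_s\mid\lambda_t$ is equivalent to $L_{y_t}\subseteq L_{y_s}$; as $\mathsf{Inn}(X)\leq S_n$ acts faithfully on $X$, your condition ``$L_{y_t}$ equals the kernel'' amounts to $L_{y_t}=1$, i.e.\ to $\phi_x$ having a cycle of length equal to its order. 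This is essentially the paper's own translation in Proposition~\ref{prop:re}, phrased with point stabilisers for the action on $X$ rather than centralisers for the action on $\Phi(X)$ (the two pictures differ by the fibers of $\Phi$, which Lemma~\ref{lem:fiber} and the remark after it control). Your observation that the prime-power case is automatic, because the subgroup lattice of $C$ is then a chain, is also correct and is exactly how the paper dispatches the degrees $3$ and $4$ in Theorem~\ref{thm:sym}.

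Two points of comparison with what the paper actually does. First, your guess at the mechanism behind Theorem~\ref{thm:prim} is not quite the paper's: rather than bounding $K$-orbits on $X\setminus\{x\}$, the paper argues by contradiction, using Proposition~\ref{prop:re}(i) to get that $F=\langle\phi_x\rangle$ meets every conjugate of the centraliser $H=C_G(\phi_x)$ nontrivially, and then using maximality of $H$ to push such an intersection into the centre of $\mathsf{Inn}(X)$, which is trivial. Second, your diagnosis of the obstacle --- that when $|C|$ has several distinct prime divisors no purely permutation-theoretic input forces $L_{y_t}$ to be minimum among the $L_y$ --- accurately reflects the state of the paper: even for conjugacy classes of $S_d$ the argument must construct an explicit $y$ whose centraliser meets $\langle x\rangle$ trivially, and no general way of converting axiom (A1) into the required stabiliser constraint is offered. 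In short: correct reformulation, correct identification of the hard core, no proof --- which is consistent with the statement being a conjecture the paper itself leaves open.
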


Let $G$ be a finite group, $\alpha$ be an automorphism of $G$, and $H$ be a subgroup of $G$ pointwise fixed by $\alpha$, i.e., $\alpha(h) = h$ for every $h\in H$. One can easily verify that the coset space $G/H$ with the operation $xH\triangleright yH := x\alpha(x^{-1}y)H$ satisfies quandle axioms. It is called the \emph{homogeneous quandle} and denoted by $(G,H,\alpha)$. Any finite connected quandle is isomorphic to a homogeneous quandle. Therefore, given the complete list of transitive groups of degree $n$, it is possible to determine the complete list of non-isomorphic connected quandles of $n$ elements. This method is used by Vendramin to compute all connected quandles having less than $48$ elements. The list is available in \textsf{Rig}~\cite{rig}, a \textsf{GAP} package designed for computations related to racks and quandles. In Table~\ref{table:sq} we present permutations of a quandle which is the fourth quandle with 12 elements in Vendramin's list. The reader may easily verify that the following partition
$$  \{1,5,9\},\; \{2,6,10\},\; \{3,7,11\},\; \{4,8,12\}  $$
formed by taking the first three numbers in each row of the table is a block system for the action of the inner automorphism group on the rack itself. We shall prove that Conjecture~\ref{conj:hay} holds when the inner automorphism group acts on the rack primitively (see Theorem~\ref{thm:prim}).

\begin{table}[h!]
\centering
\caption{Permutations of SmallQuandle(12,4) in \textsf{Rig}}
\label{table:sq}
\begin{tabular}{rll}
  $\phi_1$ & :     &\quad $(1)(5,9)(2,4,3)(6,12,7,10,8,11)$   \\
  $\phi_2$ & :     &\quad $(2)(6,10)(1,3,4)(5,11,8,9,7,12)$   \\
  $\phi_3$ & :     &\quad $(3)(7,11)(1,4,2)(5,12,6,9,8,10)$   \\
  $\phi_4$ & :     &\quad $(4)(8,12)(1,2,3)(5,10,7,9,6,11)$   \\
  $\phi_5$ & :     &\quad $(5)(1,9)(6,8,7)(2,12,3,10,4,11)$   \\
  $\phi_6$ & :     &\quad $(6)(2,10)(5,7,8)(1,11,4,9,3,12)$   \\
  $\phi_7$ & :     &\quad $(7)(3,11)(5,8,6)(1,12,2,9,4,10)$   \\
  $\phi_8$ & :     &\quad $(8)(4,12)(5,6,7)(1,10,3,9,2,11)$   \\
  $\phi_9$ & :     &\quad $(9)(1,5)(10,12,11)(2,8,3,6,4,7)$   \\
  $\phi_{10}$ & :  &\quad $(10)(2,6)(9,11,12)(1,7,4,5,3,8)$   \\
  $\phi_{11}$ & :  &\quad $(11)(3,7)(9,12,10)(1,8,2,5,4,6)$   \\
  $\phi_{12}$ & :  &\quad $(12)(4,8)(9,10,11)(1,6,3,5,2,7)$   \\
\end{tabular}
\end{table}

Let $A$ be a finite abelian group and $\alpha$ be an automorphism of $A$. The corresponding homogeneous quandle $(A,1,\alpha)$ is called an \emph{affine quandle}. Let $\beta(x) := x - \alpha(x)$ for every $x\in A$. Observe that in  $(A,1,\alpha)$ the quandle operation is defined by
$$x\triangleright y := \alpha(y-x) + x = \alpha(y) + \beta(x) = \beta(x-y) + y$$
and the orbit of $y$ under the action of inner automorphism group is $y + \beta(A)$. Suppose  $(A,1,\alpha)$ is connected. Then $\beta$ must be a bijection of $A$. Moreover, the inner automorphism group is isomorphic to a cyclic extension of $A$, hence is solvable. Now, consider the profile of $(A,1,\alpha)$. Kajiwara and Nakayama proved in \cite{KN16} that there must be an element of $A$ whose orbit size under the action of $\langle\phi_x\rangle$ is the order of $\phi_x$. Therefore, Hayashi's Conjecture holds for connected affine quandles.

Let $C$ be a conjugacy class of a symmetric group $S_d$. Hence, the inner automorphism group of $C$ is either isomorphic to $A_d$ or $S_d$. In particular $\mathsf{Inn}(C)$ is not solvable if $d\geq 5$. Let $x$ be an element of $C$ and let $\ell$ be the order of $x$ in $S_d$. To prove Hayashi's Conjecture holds for $C$, one possible way is to find an element $y$ of $C$ so that $x^kyx^{-k}\neq y$ for $1\leq k < \ell$. However, this method seems to be not applicable for other classes of groups as we don't have an explicit description of the elements of conjugacy class $C$. We shall prove that Conjecture~\ref{conj:hay} holds for the conjugacy classes in a symmetric group using a similar approach (see Theorem~\ref{thm:sym}).

\section{Preliminaries}\label{sec:pre}

The reader may refer to \cite{AG03} for the basic theory of racks and in particular to Lemma~1.7, Lemma~1.9 and Proposition~3.2 of the same paper for alternative statements and proofs of the following Proposition.

\begin{prop}\label{prop:act}
  Let $X$ be a rack. Then following statements hold.
  \begin{enumerate}[(i)]
  \item The map $\Phi\colon X\to \mathsf{Inn}(X)$ defines a rack morphism between $X$ and $\Phi(X)$ which is an isomorphism if $X$ is faithful.
  \item If $X$ is faithful, then the action of $\mathsf{Inn}(X)$ on $X$ by automorphisms and on $\Phi(X)$ by conjugation are isomorphic and the center of $\mathsf{Inn}(X)$ is trivial.  
  \item If $X$ is a conjugation rack and $G := \langle X \rangle$, then the map $\Phi\colon X\to \Phi(X)$ extends to a map
  $$\Phi\colon G\to \mathsf{Inn}(X),$$
  which is a group homomorphism and $\mathsf{Inn}(X)\cong G/Z(G)$.
  \item The rack $X$ is connected and faithful if and only if it is isomorphic, as a quandle, to a conjugacy class $C$ of a group $G$ so that $G = \langle C\rangle$ and $Z(G) = 1$.
  \end{enumerate}

\end{prop}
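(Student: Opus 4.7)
The plan is to handle the four parts in order, since each builds on its predecessors, and in each case the work is essentially to unwind definitions using axioms (A1) and (A2). For (i), axiom (A1) can be rewritten as $\phi_x \phi_y = \phi_{x \triangleright y} \phi_x$, i.e.\ $\phi_{x \triangleright y} = \phi_x \phi_y \phi_x^{-1}$, which is exactly the condition for $\Phi$ to be a rack morphism from $X$ into $\mathsf{Inn}(X)$ equipped with the conjugation quandle structure. Surjectivity onto $\Phi(X)$ is tautological, and injectivity under the faithfulness hypothesis is the very definition. For (ii), the identity $\sigma \phi_x \sigma^{-1} = \phi_{\sigma(x)}$ already noted in the introduction says that $\Phi$ intertwines the natural action of $\mathsf{Inn}(X)$ on $X$ with its conjugation action on $\Phi(X)$, so by (i) these two actions are isomorphic when $X$ is faithful. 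A central $\sigma \in \mathsf{Inn}(X)$ then satisfies $\phi_{\sigma(x)} = \phi_x$ for every $x$, and faithfulness collapses this to $\sigma(x) = x$ for all $x$, hence $\sigma = 1$.

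For (iii), the first task is to see that conjugation by an arbitrary $g \in G = \langle X \rangle$ preserves $X$ as a set. Closure under conjugation by each $x \in X$ is built into the conjugation-rack hypothesis, and axiom (A2) applied to $X$ gives, for every $x,z \in X$, a $y \in X$ with $xyx^{-1} = z$, i.e.\ $x^{-1} z x \in X$; so conjugation by $x^{-1}$ also preserves $X$, and hence so does conjugation by any word in $X \cup X^{-1}$. This produces a group homomorphism $\Phi\colon G \to \mathrm{Aut}(X)$ extending $\Phi|_X$, whose image is the subgroup generated by $\Phi(X)$, namely $\mathsf{Inn}(X)$. Its kernel consists of those $g \in G$ centralizing every element of $X$, and because $X$ generates $G$ this centralizer is exactly $Z(G)$; the stated isomorphism $\mathsf{Inn}(X) \cong G/Z(G)$ follows.

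Part (iv) is the synthesis. The forward direction is quick: given $X$ connected and faithful, take $G := \mathsf{Inn}(X)$ and $C := \Phi(X)$; by (i) we have $X \cong C$ as quandles, $G = \langle C \rangle$ by definition of inner automorphism group, connectedness upgrades to $G$ acting transitively on $C$ by conjugation so $C$ is a single conjugacy class, and $Z(G) = 1$ by (ii). For the converse, starting from a conjugacy class $C \subseteq G$ with $G = \langle C \rangle$ and $Z(G) = 1$, apply (iii) to identify $\mathsf{Inn}(C) \cong G/Z(G) = G$, from which injectivity of $\Phi|_C$ (faithfulness) and transitivity of $\mathsf{Inn}(C)$ on $C$ (connectedness) both follow immediately. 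No step is individually difficult; the main obstacle, if any, is bookkeeping—keeping straight the three distinct roles of $X$ as a set being acted on, as a set of automorphisms that act, and as a generating subset of an enveloping group—but once $x$ is identified with $\phi_x$ under faithfulness everything lines up.
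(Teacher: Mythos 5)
Your proposal is correct and follows essentially the same route as the paper: (i) via the identity $\phi_{x\triangleright y}=\phi_x\phi_y\phi_x^{-1}$, (ii) via the intertwining relation $\sigma\phi_x\sigma^{-1}=\phi_{\sigma(x)}$, (iii) by extending $\Phi$ multiplicatively to $G=\langle X\rangle$ with kernel $Z(G)$, and (iv) by assembling these. The only (harmless) difference is that in (iii) you justify well-definedness by noting closure of $X$ under conjugation by inverses via (A2), where the paper simply writes group elements as positive words in $X$.
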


\begin{proof}
  \emph{(i)}\; Observe that for every $x,y,z\in X$ the equality $\phi_x(y\triangleright z) = \phi_x(y)\triangleright \phi_x(z)$ holds by axiom (A1). In other words  $\phi_x\phi_y = \phi_{x\triangleright y}\phi_x$ holds for any two elements $x,y\in X$. However, this means the map $\Phi$ takes $x\triangleright y$ into $\phi_x\phi_y\phi_x^{-1} = \phi_x\triangleright\phi_y$ and so $\Phi$ is a rack morphism between $X$ and $\Phi(X)$. Further if $\Phi$ is injective then $X$ and $\Phi(X)$ would be isomorphic quandles.
  
  \emph{(ii)}\; Suppose $X$ is faithful. Pick an automorphism $\sigma\in \mathsf{Inn}(X)$. Since, for any $x\in X$,
  $$ \Phi(\sigma\cdot x) = \Phi(\sigma(x)) = \phi_{\sigma(x)} = \sigma\phi_x\sigma^{-1} = \sigma\cdot \Phi(x), $$
  we see that the map $\Phi$ defines an isomorphism between the actions of $\mathsf{Inn}(X)$ on $X$ and on $\Phi(X)$. Now, if $\sigma$ lies in the center of $\mathsf{Inn}(X)$, then $\phi_{\sigma(x)} = \sigma\phi_x\sigma^{-1} = \phi_x$ for every $x\in X$ implying $\sigma$ is the identity map on $X$ since the map $\Phi\colon x\mapsto \phi_x$ is a bijection.

  \emph{(iii)}\; Consider the map $\Phi\colon G\to \mathsf{Inn}(X)$ taking an element $g:=y_j\dots y_2y_1$ of $G$, written as the product of some elements $y_i\in X$ for $1\leq i\leq j$, to an element $\phi_g$ of the inner automorphism group $\mathsf{Inn}(X)$, where $\phi_g$ is defined as the composition
  $$\phi_g := \phi_{y_j}\dots\phi_{y_2}\phi_{y_1}.$$
  Since, for any element $x\in X$ we have
  $$ gxg^{-1} = (y_j\triangleright\dots(y_2\triangleright(y_1\triangleright x))\dots) = \phi_{y_j}\dots\phi_{y_2}\phi_{y_1}(x), $$
  the map $\Phi$ is well-defined and defines a surjective group homomorphism. Clearly, $g\in \mathsf{Ker}(\Phi)$ if and only if $\phi_g$ is the identity map which is the case if and only if $gxg^{-1} = x$ for every generator element $x \in X$.

  \emph{(iv)}\; Suppose $X$ is faithful and connected. Since $X$ is faithful, the rack $X$ is isomorphic to the conjugation rack $\Phi(X)$ and since $X$ is connected, conjugation rack $\Phi(X)$ is a conjugacy class of the inner automorphism group $\mathsf{Inn}(X)$. By the previous part the center of $\mathsf{Inn}(X)$ is trivial.

  Conversely, suppose $X$ is isomorphic to a conjugacy class $C$ of a group $G$ so that $G = \langle C\rangle$ and $Z(G) = 1$. By the previous part, the rack $C$ is connected and faithful. Therefore $X$ is faithful and connected as well.
\end{proof}

\begin{lem}[see {\cite[Lemma~1.21]{AG03}}]\label{lem:fiber}
    Let $X$ and $Y$ be two racks and let $f\colon X\to Y$ be a surjective rack homomorphism. Then, for any $y_1,y_2\in Y$, the cardinalities of the fibers $f^{-1}(y_2)$ and $f^{-1}(y_1\triangleright y_2)$ are equal. In particular, every fiber of $Y$ has the same cardinality if $Y$ is connected.
\end{lem}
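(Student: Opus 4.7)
The plan is to produce, for each ordered pair $(y_1,y_2)$ in $Y$, an explicit bijection between the fibers $f^{-1}(y_2)$ and $f^{-1}(y_1\triangleright y_2)$; the natural candidate is the restriction of $\phi_{x_1}$ to $f^{-1}(y_2)$ for any choice of $x_1\in f^{-1}(y_1)$ (which exists by surjectivity of $f$).

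First I would verify that $\phi_{x_1}$ sends $f^{-1}(y_2)$ into $f^{-1}(y_1\triangleright y_2)$. This is immediate from $f$ being a rack morphism: for $x\in f^{-1}(y_2)$,
\[
  f(\phi_{x_1}(x)) = f(x_1\triangleright x) = f(x_1)\triangleright f(x) = y_1\triangleright y_2.
\]
Since $\phi_{x_1}$ is a bijection of $X$ (by axiom (A2) for $X$), this restriction is injective. For surjectivity of the restriction, take $x'\in f^{-1}(y_1\triangleright y_2)$ and use axiom (A2) in $X$ to get a unique $x\in X$ with $\phi_{x_1}(x)=x'$; then
\[
  y_1\triangleright y_2 = f(x') = f(x_1\triangleright x) = y_1\triangleright f(x),
\]
and axiom (A2) applied in $Y$ forces $f(x)=y_2$, so $x\in f^{-1}(y_2)$. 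This gives the bijection and hence the first claim $|f^{-1}(y_2)| = |f^{-1}(y_1\triangleright y_2)|$.

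For the second claim, assume $Y$ is connected. The relation $y\sim y'$ defined by $|f^{-1}(y)|=|f^{-1}(y')|$ is evidently an equivalence relation on $Y$, and the first part shows that every generator $\phi_{y_1}$ of $\mathsf{Inn}(Y)$ preserves each equivalence class (so does $\phi_{y_1}^{-1}$, by applying the first part in the opposite direction). Hence the equivalence classes are unions of $\mathsf{Inn}(Y)$-orbits; since $\mathsf{Inn}(Y)$ acts transitively on $Y$ by assumption, there is only one class, and every fiber of $f$ has the same cardinality.

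There is no real obstacle here; the only care needed is to use axiom (A2) twice — once in $X$ to invert $\phi_{x_1}$ for surjectivity of the restriction, and once in $Y$ to conclude $f(x)=y_2$ from $y_1\triangleright f(x)=y_1\triangleright y_2$ — and to note that connectedness of $Y$ involves $\mathsf{Inn}(Y)$, so one must close the argument under taking inverses of the $\phi_y$'s.
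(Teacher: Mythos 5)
Your proof is correct and follows essentially the same route as the paper: both exhibit the bijection $x\mapsto x_1\triangleright x$ (for a chosen $x_1\in f^{-1}(y_1)$) between the two fibers, and both deduce the connected case from transitivity of $\mathsf{Inn}(Y)$. The only cosmetic difference is that the paper checks the reverse inclusion by computing $f\circ\phi_{x_1}^{-1}=\phi_{y_1}^{-1}\circ f$ directly, whereas you invoke the uniqueness clause of axiom (A2) in $Y$; these are interchangeable.
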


\begin{proof}
  We show that there exist a bijective function $B\colon X\to X$ such that
  $$ B(f^{-1}(y_2)) \subseteq f^{-1}(y_1\triangleright y_2) \text{ and } B(f^{-1}(y_1\triangleright y_2)) \subseteq f^{-1}(y_2).$$
  Let $B(x) := a\triangleright x$, where $f(a) = y_1$. Observe that $B^{-1} = \phi_a^{-1}\in \mathsf{Inn}(X)$. Suppose $x\in f^{-1}(y_2)$. Then $f(a\triangleright x) = f(a) \triangleright f(x) = y_1\triangleright y_2$. That is, $B(x)\in f^{-1}(y_1\triangleright y_2)$. Next, suppose $x\in f^{-1}(y_1\triangleright y_2)$. Then $f(B^{-1}(x)) = f\phi_a^{-1}(x) = \phi_{f(a)}^{-1}f(x) = \phi_{y_1}^{-1}(y_1\triangleright y_2) = y_2$. That is, $B^{-1}(x)\in f^{-1}(y_2)$.
\end{proof}

\begin{rem}
Let $X$ be a rack and take two elements $x,y$ of $X$. Let $\lambda_y^x$ be the orbit size of $y$ and $\bar{\lambda}_y^x$ be the orbit size of $\phi_y$ under the actions of $\langle\phi_x\rangle$. By Lemma~\ref{lem:fiber}, $\Phi^{-1}$ yields a partition of $X$ into a disjoint union of fibers of the same size. Observe that for any integer $k$ the element $\phi_x^k(y)$ of $X$ does not lie in the fiber $\Phi^{-1}(\phi_y)$ unless $\phi_x^k\phi_y\phi_x^{-k} = \phi_y$. Therefore $\bar{\lambda}_y^x$ divides $\lambda_y^x$.
\end{rem}
Let $X$ be a finite connected rack whose profile is $\lambda = (\lambda_0^{a_0},\lambda_1^{a_1},\dots,\lambda_t^{a_t})$ and let $k$ be an integer. For a permutation $\sigma$ of $X$, the \emph{$k$-part} of $\sigma$ is the set of elements of $X$ which appear in a cycle of length $k$ in the permutation $\sigma$. And the \emph{$\lambda_s$-part} of $X$ is the multiset formed by the $\lambda_s$-parts of elements of $\Phi(X)$. For example, if $X$ is the fourth quandle with 12 elements in Vendramin's list, using Table~\ref{table:sq}, we see that the $2$-part of $X$ is
$$ \{ 5,9,6,10,7,11,8,12,1,9,2,10,3,11,4,12,1,5,2,6,3,7,4,8 \}.  $$

\begin{lem}\label{lem:multi}
  Let $X$ be a finite connected rack  with profile $\lambda = (\lambda_0^{a_0},\lambda_1^{a_1},\dots,\lambda_t^{a_t})$. Then the number of occurrences of each element of $X$ in the $\lambda_s$-part of $X$ is same and equal to $a_s\lambda_s/f$ for each $\lambda_s$\, $(0\leq s \leq t)$, where $f$ is the cardinality of a fiber of $\Phi$.
\end{lem}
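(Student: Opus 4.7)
The plan is to establish, via a symmetry argument, that each element of $X$ occurs equally often in the $\lambda_s$-part, and then to identify this common multiplicity through a double count over pairs.

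For $y\in X$, write $T_s(y):=\{\phi\in\Phi(X):y\text{ lies on a cycle of length }\lambda_s\text{ in }\phi\}$, so that $|T_s(y)|$ is exactly the number of occurrences of $y$ in the $\lambda_s$-part. First I would show that $|T_s(y)|=|T_s(y')|$ for all $y,y'\in X$. Using connectedness, choose $g\in\mathsf{Inn}(X)$ with $g(y)=y'$; then $\phi\mapsto g\phi g^{-1}$ is a bijection of $\mathsf{Inn}(X)$ that restricts to a bijection of $\Phi(X)$ because $g\phi_x g^{-1}=\phi_{g(x)}$. Since $(g\phi g^{-1})^k(g(y))=g(\phi^k(y))$, the cycle of $g(y)$ under $g\phi g^{-1}$ has the same length as the cycle of $y$ under $\phi$; hence this bijection restricts further to a bijection $T_s(y)\to T_s(y')$. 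Denote the common cardinality by $N_s$.

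Next I would double count the pairs $(y,\phi)\in X\times \Phi(X)$ such that $y$ lies on a cycle of length $\lambda_s$ in $\phi$. Summing over $y$ yields $|X|\cdot N_s$, and summing over $\phi$ yields $|\Phi(X)|\cdot a_s\lambda_s$, since each $\phi\in\Phi(X)$ contributes exactly $a_s$ cycles of length $\lambda_s$, that is $a_s\lambda_s$ elements. Thus $N_s=(|\Phi(X)|/|X|)\cdot a_s\lambda_s$.

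Finally I would convert $|\Phi(X)|/|X|$ into $1/f$ by applying Lemma~\ref{lem:fiber} to the surjective rack homomorphism $\Phi\colon X\to\Phi(X)$ provided by Proposition~\ref{prop:act}(i). For this I need to know that $\Phi(X)$ is itself connected, which follows from transitivity of $\mathsf{Inn}(X)$ on $X$ combined with the identity $g\phi_x g^{-1}=\phi_{g(x)}$: the conjugation action of $\mathsf{Inn}(X)$ on $\Phi(X)$ realises the inner automorphism action on the conjugation rack $\Phi(X)$ and is transitive. Lemma~\ref{lem:fiber} then gives $|\Phi^{-1}(\phi)|=f$ for every $\phi\in\Phi(X)$, whence $|X|=f\cdot|\Phi(X)|$ and $N_s=a_s\lambda_s/f$. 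I do not anticipate a serious obstacle; the argument is essentially bookkeeping, with the only mildly nontrivial point being the verification that $\Phi(X)$ is connected in order to apply Lemma~\ref{lem:fiber}.
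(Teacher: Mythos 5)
Your proposal is correct and follows essentially the same route as the paper's proof: the conjugation identity $\sigma\phi_x\sigma^{-1}=\phi_{\sigma(x)}$ plus connectedness gives equidistribution of occurrences over $X$, and the count is then fixed by $|\Phi(X)|=|X|/f$. The paper leaves the double count and the connectedness of $\Phi(X)$ (needed to invoke Lemma~\ref{lem:fiber}) implicit, whereas you spell them out; there is no substantive difference in method.
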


\begin{proof}
  Let $\sigma$ be an element of $\mathsf{Inn}(X)$. Conjugating each element of $\Phi(X)$ with $\sigma$ yields a permutation of $\Phi(X)$. Since $\sigma\phi_x\sigma^{-1} = \phi_{\sigma(x)}$, if $\sigma(u) = v$ for some distinct elements $u$ and $v$ of $X$ then the number of occurrences of $u$ and $v$ in the  $\lambda_s$-part of $X$ would be the same. The claim follows from the facts that $X$ is connected and the cardinality of $\Phi(X)$ is $n/f$, where $n$ is the number of elements in $X$.  
\end{proof}

Next, we shall review Conjecture~\ref{conj:hay} in group theoretical terms. Let $X$ be a finite connected rack and fix an element $x\in X$. Let $F:=\langle\phi_x\rangle$ be the subgroup of $G:=\mathsf{Inn}(X)$ generated by $\phi_x$ and $H:=C_G(\phi_x)$ be the centralizer of $\phi_x$ in $G$. Clearly, $F\leq H$. The action of $G$ on $\Phi(X)$ by conjugation is equivalent to the action of $G$ on the coset space $G/H$ by left multiplication. Let $\lambda = (\lambda_0^{a_0},\lambda_1^{a_1},\dots,\lambda_t^{a_t})$ be the profile of $\Phi(X)$. Then, the permutation $\phi_{\phi_x}$ would correspond to the permutation
$$ (H)(\phi_x\phi_{y_1} H,\dots,\phi_x^{\lambda_1}\phi_{y_1} H)\dots (\phi_x\phi_{y_q} H,\dots,\phi_x^{\lambda_s}\phi_{y_q} H)\dots (\phi_x\phi_{y_r} H,\dots,\phi_x^{\lambda_t}\phi_{y_r} H), $$
where $\phi_{y_q}$ $(0\leq q \leq r = \sum_{s=0}^ta_s\lambda_s)$ are some representative elements from the cycles of $\phi_{\phi_x}$. Let $q := (\sum_{i=0}^{s-1}a_i\lambda_i) + j$ for some $1\leq j \leq a_s$. It should be clear that
$$ \lambda_s = \frac{| F \phi_{y_q} H |}{|H|}. $$
More importantly, the subgroup $\langle \phi_x^{\lambda_s}\rangle$ of $F$ is also a subgroup of $\phi_{y_q}H\phi_{y_q}^{-1}$. 

\begin{prop}\label{prop:re}
  Let $X$ be a finite connected rack and $x$ be an element of $X$. Let $F$ be the subgroup of $\mathsf{Inn}(X)$ generated by $\phi_x$ and $H$ be the centralizer of $\phi_x$ in $\mathsf{Inn}(X)$. Let $\lambda = (\lambda_0^{a_0},\lambda_1^{a_1},\dots,\lambda_t^{a_t})$ be the profile of $X$. The following statements hold:
  \begin{enumerate}[(i)]
  \item If $F$ intersects $\phi_yH\phi_y^{-1}$ trivially for some $\phi_y\in \Phi(X)$, then $\lambda_s$ divides $\lambda_t$ for each  $0\leq s \leq t$.
  \item Conversely, if $\lambda_s$ divides $\lambda_t$ for each  $0\leq s \leq t$ and $X$ is faithful, then $F$ intersects one of the conjugates of $H$ trivially.
  \end{enumerate}
\end{prop}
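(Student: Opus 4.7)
The plan is to use the group-theoretic reformulation from the paragraph preceding the proposition. Under the $G$-equivariant bijection between the coset space $G/H$ (with left multiplication by $G$) and $\Phi(X)$ (with conjugation) given by $gH\mapsto g\phi_x g^{-1}$, the stabilizer in $F$ of the coset $\phi_y H$ is precisely $F\cap \phi_y H\phi_y^{-1}$, so the corresponding $F$-orbit in $\Phi(X)$ has length $|F|/|F\cap\phi_y H\phi_y^{-1}|$. Since $\mathsf{Inn}(X)\leq S_n$, the order $|F|$ of $\phi_x$ as a group element equals its order as a permutation of $X$, so every cycle length of $\phi_x$ on $X$ (and every orbit length of $F$ on $\Phi(X)$) divides $|F|$.

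For part~(i), I would assume $F\cap\phi_y H\phi_y^{-1}=\{1\}$ for some $\phi_y\in\Phi(X)$ and deduce that some element of $\Phi(X)$ has conjugation cycle length $\bar\lambda^{x}_{y'}=|F|$ under $\phi_x$. By the remark following Lemma~\ref{lem:fiber}, $\bar\lambda^{x}_{y'}$ divides the orbit size $\lambda^{x}_{y'}$ of any $\Phi$-preimage in $X$; combined with $\lambda^{x}_{y'}\leq|F|$, this forces $\lambda^{x}_{y'}=|F|$. Hence $\lambda_t=|F|$ and every $\lambda_s$ divides $\lambda_t$.

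For part~(ii), I would use that $|F|$, being the order of $\phi_x$ as a permutation of $X$, equals $\mathrm{lcm}(\lambda_0,\ldots,\lambda_t)$. The divisibility hypothesis then forces $\lambda_t=|F|$, so some $y\in X$ has a $\phi_x$-orbit of length $|F|$. Because $X$ is faithful, Proposition~\ref{prop:act}(ii) gives that $\Phi$ intertwines the $\mathsf{Inn}(X)$-actions on $X$ and on $\Phi(X)$, so the corresponding $\phi_y\in\Phi(X)$ also has conjugation orbit of length $|F|$ under $F$. Writing $\phi_y=g\phi_x g^{-1}$ for some $g\in G$ then yields $F\cap gHg^{-1}=\{1\}$.

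The main subtlety I expect to handle is the asymmetric role of faithfulness: in~(i), the fiber divisibility $\bar\lambda^{x}_{y'}\mid\lambda^{x}_{y'}$ from Lemma~\ref{lem:fiber} is enough to lift a $\Phi(X)$-cycle of length $|F|$ to an $X$-cycle of the same length, whereas in~(ii) one needs the full isomorphism of the two actions, provided by faithfulness of $X$, to push an $X$-cycle of length $|F|$ back down to $\Phi(X)$; without faithfulness the maximal $F$-orbit on $\Phi(X)$ could be a proper divisor of $|F|$ even when $\lambda_t=|F|$.
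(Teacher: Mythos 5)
Your proposal is correct and follows essentially the same route as the paper: in (i) you use the triviality of $F\cap\phi_yH\phi_y^{-1}$ to produce an $F$-conjugation orbit on $\Phi(X)$ of length $|F|$, lift it via the divisibility $\bar\lambda^x_{y'}\mid\lambda^x_{y'}$ from the remark after Lemma~\ref{lem:fiber} to conclude $\lambda_t=|F|$, and in (ii) you use $|F|=\mathrm{lcm}(\lambda_s)=\lambda_t$ together with the isomorphism of actions from Proposition~\ref{prop:act}(ii) to find a coset with trivial $F$-stabilizer. Your explicit observation about the asymmetric role of faithfulness matches the paper's treatment and is a helpful clarification of why the hypothesis appears only in part (ii).
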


\begin{proof}
  \emph{(i)}\; Let $k$ be an integer and $\bar{\lambda} = (\bar{\lambda}_0^{a_0},\bar{\lambda}_1^{a_1},\dots,\bar{\lambda}_{t'}^{a_{t'}})$ be the profile of $\Phi(X)$. By the assumption
$$ (\phi_x\phi_{y} H,\phi_x^2\phi_{y} H,\dots,\phi_x^{k}\phi_{y} H)  $$
is an orbit under the action of $F$ if and only if $k$ equals to the order of $\phi_x$. Then, clearly, $k$ equals to $\bar{\lambda}_{t'}$. From the remark following Lemma~\ref{lem:fiber} we see that $k$ divides $\lambda_s$ for some $0\leq s \leq t$ so that the order of $\phi_x$ is equal to $\lambda_t$.
  
  \emph{(ii)}\; Since $\lambda_s$ divides $\lambda_t$ for each  $0\leq s \leq t$, we see that the order of $\phi_x$ is $\lambda_t$. By Proposition~\ref{prop:act} we know that $X$ and $\Phi(X)$ have the same profiles. Therefore, the order of $\phi_{\phi_x}$ is $\lambda_t$ as well. From this we conclude that there exists an element $\phi_y$ of $\Phi(X)$ so that $\phi_yH$ lies in an orbit of size $\lambda_t$ under the action of $F$, which means the intersection of $F$ with the centralizer of $\phi_{\phi_y(x)}$ is trivial.
\end{proof}

\section{Main results}\label{sec:main}

Let $G$ be a group acting transitively on a set $\Omega$. A partition of $\Omega$ form a block system if the following property is satisfied: for any element $g$ of $G$ an element of the partition is either mapped to itself by $g$ or to another element of the partition. In that case we call the elements of the partition \emph{blocks}. A block system is trivial if either the partition consists of the whole set or it consists of the singletons. We say $G$ acts \emph{primitively} on $\Omega$ if the only block systems are the trivial block systems.

Let $X$ be a finite connected rack with profile $\lambda = (\lambda_0^{a_0},\lambda_1^{a_1},\dots,\lambda_t^{a_t})$ and $k$ be an integer. For a permutation $\sigma$ of $X$, the \emph{$\tilde{k}$-part} of $\sigma$ is the set of elements of $X$ which appear in a cycle whose length is a divisor of $k$ in the permutation $\sigma$. Let $x,y$ be two elements of $X$. If an automorphism $\sigma$ of $X$ centralizes $\phi_y$, then as a permutation $\sigma$ preserves $\lambda_s$-part of $\phi_y$ for each $\lambda_s$\, $(0\leq s \leq t)$. Let $K$ be the $\tilde{k}$-part of $\phi_x$ and $L:=X\setminus K$ be the complement of $K$ in $X$. If $\phi_x^k$ centralizes $\phi_y$, then $\phi_y$ centralizes $\phi_x^k$ as well. However, that means as a permutation $\phi_y$ preserves the set $K$ as well as the set $L$. Consider again the fourth quandle with 12 elements in Vendramin’s list (see Table~\ref{table:sq}). Its profile is $\lambda = (1^1,2^1,3^1,6^1)$ so that $\tilde{2}$-part of $\phi_i$ has $3$ elements whereas $\tilde{3}$-part has $4$ elements. Now, if $\phi_j$ centralizes $\phi_i^2$ then $\tilde{2}$-parts of $\phi_i$ and $\phi_j$ must be the same since the cycle type of $\phi_i^2$ is $(1^3,3^3)$. Moreover, for this example the set of $\tilde{2}$-parts of permutations $\phi_j$ form a block system for the action of the inner automorphism group. However, we don't know any general method for relating $\tilde{k}$-parts with a block system. 

Let $H$ be the stabilizer of an element $x\in \Omega$. Then, the action of $G$ on  $\Omega$ is primitive if and only if the stabilizer $H$ is a maximal subgroup of $G$. To see this, observe that if there exists a proper subgroup $K$ of $G$ that contains $H$ strictly, then the cosets of $K$ would be unions of some of the cosets of $H$ in $G$. However, this determines a non-trivial block system for $G/H$ under the action of $G$ by left multiplication.

\begin{thm}\label{thm:prim}
  Let $X$ be a finite connected rack with profile $\lambda = (\lambda_0^{a_0},\lambda_1^{a_1},\dots,\lambda_t^{a_t})$. If the inner automorphism group $\mathsf{Inn}(X)$ acts primitively on $X$, then $\lambda_s$ divides $\lambda_t$ for each $0\leq s \leq t$.
\end{thm}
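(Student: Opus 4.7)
Write $G := \mathsf{Inn}(X)$, $H := C_G(\phi_x)$, and let $m$ denote the order of $\phi_x$. My plan is to reduce to the faithful case and then derive a contradiction from the assumption $\lambda_t < m$ by exploiting that primitivity forces $C_G(\phi_x^{m/p}) = H$ for every prime $p \mid m$. Recall that the action of $G$ on $X$ is equivalent to its action on $\Phi(X)$ by conjugation; in the faithful case the stabilizer of $y \in X$ is $C_G(\phi_y)$, and primitivity is equivalent to $H$ being a maximal subgroup of $G$.

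For the reduction, note that the fibers of $\Phi\colon X \to \Phi(X)$ form a $G$-invariant partition (via the identity $\phi_{g \cdot y} = g\phi_y g^{-1}$) whose blocks all have equal size by Lemma~\ref{lem:fiber}, so they form a block system. Primitivity makes this system trivial: either the fibers are singletons, so $X$ is faithful, or $X$ is a single fiber, in which case $G = \langle \phi_x \rangle$ is cyclic acting transitively on $X$, forcing $\phi_x$ to be a full $|X|$-cycle; the profile is then $(|X|^1)$ and the conclusion is immediate.

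Now assume $X$ is faithful, so $Z(G) = 1$ by Proposition~\ref{prop:act}(ii), and suppose for contradiction that $\lambda_t < m$. Every cycle length $l_y$ of $\phi_x$ on $X$ divides $m$ and is strictly less than $m$, hence $l_y \mid m/p$ for some prime $p \mid m$; thus $y \in K_p := \mathsf{Fix}(\phi_x^{m/p})$, and $X = \bigcup_{p \mid m} K_p$. For each such $p$, the subgroup $C_G(\phi_x^{m/p})$ contains $H$, so by maximality it equals $H$ or $G$; the latter would force $\phi_x^{m/p} \in Z(G) = 1$, i.e.\ $m \mid m/p$, which is absurd. Hence $C_G(\phi_x^{m/p}) = H$. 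For $y \in K_p$, the stabilizer $C_G(\phi_y)$ contains $\phi_x^{m/p}$, so $\phi_y \in C_G(\phi_x^{m/p}) = H$; therefore $\Phi(X) = \bigcup_p \Phi(K_p) \subseteq H$. But $\Phi(X)$ generates $G$, so $G \leq H$, contradicting $H \lneq G$. Hence $\lambda_t = m$ and $\lambda_s \mid \lambda_t$ for all $s$.

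The decisive step — and the key use of primitivity — is the identity $C_G(\phi_x^{m/p}) = H$, which turns the elementary covering $X = \bigcup_p K_p$ into the group-theoretic collapse $\Phi(X) \subseteq H$. Without maximality one only obtains $\Phi(X) \subseteq \bigcup_p C_G(\phi_x^{m/p})$, which in general need not lie in any proper subgroup of $G$.
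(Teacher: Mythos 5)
Your proof is correct, and it reaches the contradiction by a genuinely different mechanism than the paper, although both arguments rest on the same two pillars: maximality of the point stabilizer $H=C_G(\phi_x)$ (from primitivity) and triviality of $Z(G)$ (from faithfulness). The paper argues as follows: it invokes Proposition~\ref{prop:re}(i) contrapositively to conclude that if some $\lambda_s\nmid\lambda_t$ then $F=\langle\phi_x\rangle$ meets $C_G(\phi_y)$ nontrivially for \emph{every} $y$; it then picks a single $y$ with $\langle\phi_y\rangle\not\leq H$, so that $\langle \langle\phi_y\rangle, H\rangle=G$ by maximality, and observes that the nontrivial intersection $F\cap C_G(\phi_y)$ centralizes both generators and hence lies in $Z(G)=1$. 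You instead work globally: you cover $X$ by the fixed-point sets $K_p=\mathsf{Fix}(\phi_x^{m/p})$, use maximality plus $Z(G)=1$ to force the collapse $C_G(\phi_x^{m/p})=H$, and conclude $\Phi(X)\subseteq H$, contradicting $\langle\Phi(X)\rangle=G$. Your route is self-contained (it bypasses Proposition~\ref{prop:re} and the profile of $\Phi(X)$ entirely) and makes the role of primitivity very transparent in the identity $C_G(\phi_x^{m/p})=H$; the paper's route is shorter once Proposition~\ref{prop:re} is available and localizes the contradiction at one well-chosen element. A further small point in your favor: your reduction explicitly disposes of the degenerate possibility that the whole of $X$ is a single fiber of $\Phi$ (where $G$ is cyclic and the profile is a single full cycle), a case the paper's opening sentence passes over when it asserts that $X\cong\Phi(X)$.
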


\begin{proof}
  By assumption $\mathsf{Inn}(X)$ acts primitively on $X$. Since the fibers of $\Phi\colon X\to \Phi(X)$ form a partition of $X$ which is a block system for the action of $\mathsf{Inn}(X)$, the racks $X$ and $\Phi(X)$ are isomorphic and have the same profiles. Moreover, by Proposition~\ref{prop:act} the center of $\mathsf{Inn}(X)$ is trivial.
  
  To prove the Theorem suppose contrarily $\lambda_s$ does not divide $\lambda_t$ for some $0\leq s\leq t$. We want to derive a contradiction. Let us fix an element $x\in X$ and let $y$ be an element of $X$ different from $x$. Let $F:=\langle\phi_x\rangle$ and $F':=\langle\phi_y\rangle$. Also, let $H$ be the centralizer of $F$ in $G:=\mathsf{Inn}(X)$ and $H'$ be the centralizer of $F'$. Notice that $H'$ is a conjugate of $H$. Consider the intersection $I:= F\cap H'$. Using Proposition~\ref{prop:re}(i) contrapositively we see that $I$ is a non-trivial subgroup of $G$. Now $I$ centralizes both $F'$ and $H$, so it lies in the center of $\langle F', H\rangle$. Since $G$ acts on $X$ primitively by assumption, we see that $H$ is a maximal subgroup of $G$. Clearly, we can choose $y$ so that $F'$ is not contained in $H$. However that means $I$ lies in the center of $G$ which is a contradiction.
\end{proof}

Recall that in a symmetric group two elements are conjugate if and only if they share the same cycle type. Let $C$ be a conjugacy class in a symmetric group $S_d$. Recall that a conjugacy class generates a normal subgroup of the group. Therefore, if $d\geq 5$, as a quandle the inner automorphism group $\mathsf{Inn}(C)$ of $C$ is isomorphic to the symmetric group $S_d$ when $C$ is the conjugacy class of an odd permutation and it is isomorphic to the alternating group $A_d$ when $C$ is the conjugacy class of an even permutation different from the identity permutation. Observe that $C$ is faithful when $d\geq 5$ by Proposition~\ref{prop:act}, since the centers of $A_d$ and $S_d$ are trivial.

\begin{thm}\label{thm:sym}
  Let $C$ be a conjugacy class in a symmetric group $S_d$ which is connected as a quandle. If $\lambda = (\lambda_0^{a_0},\lambda_1^{a_1},\dots,\lambda_t^{a_t})$ is the profile of $C$, then $\lambda_s$ divides $\lambda_t$ for each $0\leq s \leq t$.
\end{thm}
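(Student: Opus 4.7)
The strategy I would take exploits Proposition~\ref{prop:re}(i). By Proposition~\ref{prop:act}(iii) we have $\mathsf{Inn}(C)\cong\langle C\rangle/Z(\langle C\rangle)$, and in every connected case relevant here the centre of $\langle C\rangle$ is trivial (for $d\ge 5$ this is because $\langle C\rangle\in\{A_d,S_d\}$ by normality, and for $d\le 4$ a short inspection of the few connected classes suffices). Hence the order of $\phi_x$ equals the order $\ell$ of $x$, every orbit of $\langle\phi_x\rangle$ on $C$ has size dividing $\ell$, and the conclusion $\lambda_s\mid\lambda_t$ reduces to exhibiting one orbit of size exactly $\ell$. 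By Proposition~\ref{prop:re}(i) this is equivalent to producing $y\in C$ with
\[
  \langle x\rangle\cap C_{S_d}(y)=\{e\},
\]
i.e.\ a conjugate of $x$ centralised by no nontrivial power of $x$.

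Because $\langle x\rangle$ is cyclic of order $\ell$, this intersection is trivial if and only if $x^{\ell/p}$ fails to commute with $y$ for every prime $p\mid\ell$. The combinatorial task becomes: build one $y\in C$, of the same cycle type as $x$, that commutes with none of the finitely many prime-order powers $x^{\ell/p}$. Since any permutation commuting with $y$ must send each cycle of $y$ to a cycle of $y$ of the same length, it suffices to arrange the supports of $y$'s cycles so that, for each prime $p\mid\ell$, the permutation $x^{\ell/p}$ fails to match the cycle-support structure of $y$ to itself.

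For the explicit construction, I would write $x$ in disjoint cycle notation and take $y=\sigma x\sigma^{-1}$ for a short product of transpositions $\sigma$ that perturb the supports of the cycles of $x$. As a prototype: if $(a_0,a_1,\dots,a_{m-1})$ is a cycle of $x$ and $b$ is any point outside its support, the transposition $\sigma=(a_0\;b)$ gives a conjugate whose modified cycle has support stabilised by $x^k$ only when $m\mid k$. Iterating such swaps --- for each prime $p\mid\ell$ performing one on a cycle of $x$ whose length is divisible by $p$ --- should force $y$ to fail to commute with every $x^{\ell/p}$.

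The main obstacle will be the combinatorial bookkeeping needed to make these perturbations compatible, so that the final $y$ still has the prescribed cycle type and no $x^{\ell/p}$ commutes with it through some cross-matching of distinct equal-length cycles. The delicate cases are those where $d$ is only marginally larger than the support of $x$ (few spare points available) or where the cycle type contains many different cycle lengths (many primes to handle at once); these may require dedicated constructions. Cycle types for which $C$ is not connected --- principally, the even permutations with cycle lengths all odd and distinct --- are excluded by hypothesis and need no attention.
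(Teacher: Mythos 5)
Your reduction is exactly the one the paper uses: by Proposition~\ref{prop:act} and Proposition~\ref{prop:re}(i), since for $d\ge 5$ the class $C$ is faithful and $\mathsf{Inn}(C)$ is $A_d$ or $S_d$ with trivial centre, it suffices to produce $y\in C$ with $\langle x\rangle\cap C_{S_d}(y)=1$, equivalently a conjugate of $x$ not centralised by $x^{\ell/p}$ for any prime $p$ dividing the order $\ell$ of $x$. Up to that point the argument is sound; you could even simplify it by discarding outright the case where $\ell$ is a prime power, since there all orbit sizes are powers of one prime and the divisibility is automatic without exhibiting a full-length orbit (this is how the paper disposes of $d=3,4$ and of part of the remaining case).

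The gap is that the construction of $y$ --- which is the real content of the theorem --- is left as an acknowledged ``obstacle,'' and your prototype does not survive either of the two cases into which the paper splits. First, a permutation centralising $y$ need only permute the cycles of $y$ of each given length among themselves, so showing that $x^{\ell/p}$ fails to stabilise the support of the one modified $m$-cycle proves nothing when $y$ has several $m$-cycles: $x^{\ell/p}$ may carry the perturbed cycle onto an unperturbed one. You name this cross-matching problem but do not resolve it; the paper's resolution, when $x$ has at least two distinct cycle lengths, is to choose $y$ so that for every $s$ the $\mu_s$-part of $y$ does not coincide with that of $x$, which rules out all proper powers at once. Second, and more seriously, when $x$ is a product of $b_0$ cycles all of the same length $\mu_0$ (in particular a single $d$-cycle, e.g.\ $d=\mu_0=6$) there may be no point outside the support of $x$, so the transposition trick is unavailable; here $\ell=\mu_0$ may be assumed not to be a prime power (hence $\mu_0\ge 6$), and the paper instead argues through the structure of the centraliser $C_{\mu_0}\wr S_{b_0}$. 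Without an argument covering this equal-cycle-length case your proof is incomplete.
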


\begin{proof}
  It is clear that the Theorem holds when $C$ is the conjugacy class of a central element. So we assume in the rest of the proof that $C$ is the conjugacy class of a non-central element.
  
  First, we prove the Theorem when the degree of $S_d$ is equal to three or four. If $d=3$, there are two possibilities. Either $C$ is the conjugacy class of $(1,2)$ or it is the conjugacy class of $(1,2,3)$. In the first case, the inner automorphism group is $S_3$ and since the center of $S_3$ is trivial, $C$ is faithful by Proposition~\ref{prop:act}. The Theorem holds in this case since the order of $(1,2)$ is a prime number. The latter possibility cannot occur as the conjugacy class of $(1,2,3)$ is not connected as a rack.

  If $d=4$, there are four possibilities. The conjugacy class of $(1,2)(3,4)$ generates an abelian subgroup so it is not connected. But in all other three cases the conjugacy class is connected as a quandle. Since the order of an element of the conjugacy class is a prime power in all those cases the Theorem holds when the degree of the symmetric group is four.

  Next, suppose $d\geq 5$.  From the previous explanations we know that $C$ is faithful and the inner automorphism group is isomorphic to either $A_d$ or $S_d$ when $d\geq 5$. Now, fix an element $x$ of $C$ and let $\mu = (\mu_0^{b_0},\mu_1^{b_1},\dots,\mu_r^{b_r})$ be the cycle type of $x$ in $S_d$. Moreover, since the map $\Phi\colon C\to \mathsf{Inn}(C)$ extends to a group isomorphism, using  Proposition~\ref{prop:re}(i), it is enough to show that $\langle x\rangle$ intersects the centralizer of some element $y$ of $C$ trivially. Suppose $r\geq 1$. Let $y$ be an element of $C$ having the following property: For any $0\leq s\leq r$ the $\mu_s$-parts of $x$ and $y$ are not coinciding. If the value of $r\geq 1$, the existence of $y$ in $S_d$ or $A_d$ is clear. Let $\ell$ be the order of $x$ and $k$ be an integer dividing $\ell$. Obviously, $x^k$ does not centralize $y$ unless $k=\ell$. Hence the Theorem holds when $C$ is the conjugacy class of an element containing at least two cycles of different lengths in its cycle decomposition. Next, suppose $r=0$. In that case $d = b_0\mu_0$ and the order of $x$ is $\mu_0$. In turn, the order of $\phi_x$ is $\mu_0$ as well and we may assume $\mu_0$ is neither $1$ nor a prime power since the Theorem holds trivially in those cases. Recall that the centralizer of $x$ in $S_d$ is isomorphic to $C_{\mu_0}\wr S_{b_0}$, where $C_{\mu_0}$ is the cyclic group of order $\mu_0$. Moreover, for some conjugate $y$ of $x$ the intersection of $\langle x\rangle$ with the centralizer of $y$ is trivial when $\mu_0\geq 6$. This completes the proof.
\end{proof}

The proof of Theorem~\ref{thm:sym} can be adapted to work when $C$ is the conjugacy class of an alternating group. Notice that a conjugacy class of a simple group is necessarily connected as a quandle. The only complication arises when a conjugacy class of $A_d$ is not a conjugacy class of $S_d$. Actually this may happen. Let $x$ be an even permutation of $S_n$ whose cycle type is $\mu = (\mu_0^{b_0},\mu_1^{b_1},\dots,\mu_r^{b_r})$. The conjugacy class of $x$ in $S_n$ splits into two different conjugacy classes in $A_n$ if $\mu_s$ is an odd number and $b_s=1$ for every $0\leq s\leq r$. Otherwise, the conjugacy class of $x$ in $S_n$ is same with the conjugacy class of $x$ in $A_n$. Even in the former case, fixing an element $x$ of $C$, we can show that there exist an element $y$ of $C$ so that the centralizer of $y$ intersects $\langle x\rangle$ trivially.


\end{document}